\numberwithin{equation}{section}
\def\phi{\varphi}
\def\R{\mathbb R}
\def\N{\mathbb N}
\newcommand{\ch}{\mathcal{H}}
\newcommand{\cm}{\mathcal{M}}
\newcommand{\res}{\ensuremath{\,\textsf{\small \upshape L}\,}}
\newtheorem{theorem}{Theorem}[section]
\newtheorem{lemma}[theorem]{Lemma}
\newtheorem{corollary}[theorem]{Corollary}
\theoremstyle{definition}
\theoremstyle{remark}
\def\fuhome{@math.fu-berlin.de}
\begin{document}

\title{Uniqueness of compact tangent flows in Mean Curvature Flow}

\author{Felix Schulze}
\thanks{Partially supported by a Feodor-Lynen fellowship of the Alexander von
    Humboldt Foundation.}
\address{Felix Schulze: 
  Freie Universit\"at Berlin, Arnimallee 3, 
  14195 Berlin, Germany}
\curraddr{}
\email{Felix.Schulze\fuhome}

\subjclass[2000]{53C44, 35B35}

\dedicatory{}

\keywords{}

\begin{abstract}
  We show, for mean curvature flows in Euclidean space, that if one of 
  the tangent flows at a given space-time point consists of a closed, 
  multiplicity-one, smoothly embedded  self-similar shrinker,
  then it is the unique tangent flow at that point.
  That is the limit of the parabolic rescalings does not depend on the 
  chosen sequence of rescalings. Furthermore, given such a
  closed, multiplicity-one, smoothly embedded self-similar shrinker
  $\Sigma$, we show that any solution of the rescaled flow, which is 
  sufficiently close to $\Sigma$, with Gaussian density ratios greater
  or equal to that of $\Sigma$, stays for all time close
  to $\Sigma$ and converges to a possibly different self-similarly
  shrinking solution $\Sigma'$. The central point in the argument is a 
  direct application of the Simon-{\L}ojasiewicz inequality to 
  Huisken's monotone Gaussian integral for Mean Curvature Flow.
\end{abstract}

\maketitle
\section{Introduction}
In this paper we study Mean Curvature Flow (MCF) of $n$-surfaces of
codimension $k \geq 1$ in
$\R^{n+k}$, which are close to self-similarly shrinking
solutions. In the smooth case we consider a family of embeddings 
$F:M^n\times (t_1,t_2) \rightarrow \R^{n+k}$, for $M^n$ closed, such that
\begin{equation*}
  \frac{d}{dt}F(p,t) = \vec{H}(p,t)\ ,
\end{equation*}
where $\vec{H}(p,t)$ is the mean curvature vector of $M_t:=F(M,t)$ at
$F(p,t)$. We
denote with $\cm = \bigcup_{t\in (t_1,t_2)}( M_t\times
\{t\})\subset \R^{n+k}\times \R$ its space-time track. 

In the following, let $\Sigma^n$ be a smooth, closed, embedded $n$-surface
in $\R^{n+k}$ where the mean curvature vector satisfies 
$$\vec{H}=-\frac{x^\perp}{2}.$$
Here $x$ is the position vector at a point on $\Sigma$ and ${}^\perp$
the projection to the normal space of $\Sigma$ at that point.
Such a surface gives rise to a self-similarly shrinking solution $\cm_\Sigma$,
where the evolving surfaces are given by
$$\Sigma_t= \sqrt{-t}\cdot\Sigma,\ \ \ t\in (-\infty,0).$$
We denote its space-time track by $\cm_\Sigma$. 

We also want to study the case that the flow is allowed
to be non-smooth. Following
\cite{Ilmanen}, we say that a family of Radon measures $(\mu_t)_{t\in[t_1,t_2)}$
on $\R^{n+k}$ is an integral $n$-Brakke flow, if for almost every $t$ 
the measure $\mu_t$ comes from a $n$-rectifiable varifold with integer 
densities. Furthermore, we require that 
given any $\phi \in
C_c^2(\R^{n+k};\R^+)$ the following inequality holds for every $t>0$
\begin{equation}
  \label{eq:intro2}
  \bar{D}_t\mu_t(\phi)\leq \int -\phi |\vec{H}|^2 + \langle\nabla \phi,
  \vec{H}\rangle \, d\mu_t,
\end{equation}
where $\bar{D}_t$ denotes the upper derivative at time $t$ and we take
the left hand side to be $-\infty$, if $\mu_t$ is not $n$-rectifiable, 
or does not carry a weak mean curvature. 
Note that if $M_t$ is moving smoothly by mean curvature flow, then 
$\bar{D}_t$ is just the usual derivative and we have
equality in \eqref{eq:intro2}.

We restrict to integral $n$-Brakke flows which are
close to a smooth self-similarly shrinking solution. 
The assumption that the Brakke flow is close in measure 
to a smooth solution with multiplicity one
actually yields that the Brakke flow has 
unit density. This implies that for almost all $t$ the
corresponding Radon measures can be written as  
$$ \mu_t = \ch^n\res M_t\, .$$
Here $M_t$ is a $n$-rectifiable subset of $\R^{n+k}$ and $\ch^n$ is
the $n$-dimensional Hausdorff-measure on $\R^{n+k}$.
If the flow is (locally) smooth, then $M_t$ can be (locally) represented by
a smooth $n$-surface evolving by MCF. Conversely, if
$M_t$ moves smoothly by MCF, then $\mu_t:=\ch^n\res M_t$ defines a unit density
$n$-Brakke flow.

\begin{theorem}
  \label{thm:mainthm}
  Let $\cm = (\mu_t)_{t\in (t_1,0)}$ with $t_1< 0$
  be an integral n-Brakke flow such that
  \begin{itemize}
  \item[i)] $(\mu_t)_{t\in (t_1,t_2)}$ is sufficiently close in measure
    to $\cm_\Sigma$ for some $t_1<t_2<0$.\\[-1ex]
  \item[ii)] $\Theta_{(0,0)}(\cm) \geq \Theta_{(0,0)}(\cm_\Sigma)$,
    where $\Theta_{(0,0)}(\cdot)$ is the respective Gaussian density
    at the point $(0,0)$ in space-time.
  \end{itemize}
  Then $\cm$ is a smooth flow for $t\in [(t_1+t_2)/2,0)$, and the
  rescaled surfaces $\tilde{M}_t:=(-t)^{-1/2}\cdot M_t$ can be written as normal graphs over
  $\Sigma$, given by smooth sections $v(t)$ of the normal bundle
  $T^\perp\Sigma$, with  $|v(t)|_{C^m(T^\perp\Sigma)}$ uniformly bounded 
  for all $t \in  [(t_1+t_2)/2,0)$ and all $m \in \N$. 
  Furthermore, there exists a self-similarly shrinking surface $\Sigma'$ with
  $$\Sigma' = \text{graph}_\Sigma(v')$$
  and 
  $$ |v(t)-v'|_{C^m} \leq c_m (\log(-1/t))^{-\alpha_m}$$
  for some constants $c_m>0$ and exponents $\alpha_m>0$ for all $m\in\N$. 
\end{theorem}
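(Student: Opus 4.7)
The plan is to recast the problem as a gradient flow for Huisken's Gaussian area functional and then invoke the Łojasiewicz--Simon inequality to obtain the claimed logarithmic convergence rate.

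First, I would rescale by setting $\tau = -\log(-t)$ and $\tilde M_\tau := (-t)^{-1/2} M_t$. Under this change of variables MCF becomes the rescaled mean curvature flow, whose stationary points are exactly the self-shrinkers solving $\vec H+\tfrac{1}{2}x^\perp=0$. Huisken's monotonicity says that the Gaussian functional $F(\tilde M):=(4\pi)^{-n/2}\int_{\tilde M}e^{-|x|^2/4}\,d\ch^n$ is non-increasing along the rescaled flow, with
\begin{equation*}
\tfrac{d}{d\tau}F(\tilde M_\tau)\;=\;-(4\pi)^{-n/2}\!\int_{\tilde M_\tau}\bigl|\vec H+\tfrac{1}{2}x^\perp\bigr|^2 e^{-|x|^2/4}\,d\ch^n\,,
\end{equation*}
exhibiting the rescaled flow as the negative $L^2$-gradient flow of $F$ for the Gaussian measure. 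Hypothesis (ii) forces $F(\tilde M_\tau)\geq F(\Sigma)$ for all $\tau$, while (i) together with monotonicity pins $F(\tilde M_\tau)$ just above $F(\Sigma)$. Combining the measure-closeness to $\cm_\Sigma$ with Brakke/White local regularity upgrades this closeness to smoothness, so that for $\tau\geq\tau_0$ the rescaled surface $\tilde M_\tau$ is the normal graph of a section $v(\tau)$ of $T^\perp\Sigma$ of small $C^{2,\alpha}$-norm, and standard parabolic estimates for the rescaled flow give uniform $C^m$-bounds on $v(\tau)$.

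The heart of the argument is the Łojasiewicz--Simon inequality for $F$ at its critical point $\Sigma$. Because $\Sigma$ is smooth, closed and embedded, the map $v\mapsto F(\mathrm{graph}_\Sigma(v))$ is real-analytic in a $C^{2,\alpha}$-neighborhood of $0$, and its Hessian at $0$ is an elliptic, self-adjoint Fredholm operator on $C^\infty(T^\perp\Sigma)$ (essentially the Colding--Minicozzi stability operator). Simon's abstract Łojasiewicz inequality then yields an exponent $\theta\in(0,1/2]$ and constants $C,\delta>0$ such that
\begin{equation*}
\bigl|F(\mathrm{graph}_\Sigma(v))-F(\Sigma)\bigr|^{1-\theta}\;\leq\;C\,\bigl\|\vec H+\tfrac{1}{2}x^\perp\bigr\|_{L^2(e^{-|x|^2/4})}
\end{equation*}
whenever $\|v\|_{C^{2,\alpha}}<\delta$. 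Inserting this into the gradient-flow identity produces the standard differential inequality $-\tfrac{d}{d\tau}(F(\tilde M_\tau)-F(\Sigma))^\theta\geq c\|\partial_\tau v\|_{L^2}$, whose integration gives $\int^\infty\|\partial_\tau v\|_{L^2}\,d\tau<\infty$. Consequently $v(\tau)\to v_\infty$ in $L^2$ with polynomial-in-$\tau$ rate, which after unwinding $\tau=-\log(-t)$ is exactly the logarithmic rate claimed. The limit $\Sigma':=\mathrm{graph}_\Sigma(v_\infty)$ is a critical point of $F$, hence a self-shrinker, and the $C^m$ rates follow by interpolating the $L^2$-decay against the uniform $C^{m+1}$-bounds.

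The main obstacle is a continuity-in-$\tau$ argument ensuring that $v(\tau)$ never leaves the $C^{2,\alpha}$-neighborhood in which the Łojasiewicz inequality is available: the standard Simon bootstrap shows that if $v$ were to exit at some first time $\tau_*$, the total $L^2$-displacement already supplied by the inequality up to $\tau_*$ would be smaller than the distance travelled, contradicting the smallness at $\tau_0$. The secondary technical point is to verify the analyticity and Fredholm hypotheses of Simon's theorem for the weighted variational problem on sections of the (possibly higher-codimension) normal bundle $T^\perp\Sigma$; this is routine once $\Sigma$ is known to be smooth, closed and multiplicity one.
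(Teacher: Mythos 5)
Your proposal follows essentially the same route as the paper: rescale by $\tau=-\log(-t)$, view rescaled MCF as the gradient flow of Huisken's Gaussian functional, apply the Simon--\L ojasiewicz inequality to get the differential inequality, integrate to obtain finite $L^2$-displacement and hence convergence to a critical point $\Sigma'$, and interpolate for higher norms. So the conceptual skeleton matches.

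The one place where your sketch under-specifies a genuinely delicate step is the bootstrap that keeps $v(\tau)$ inside the $C^{2,\alpha}$-ball on which the \L ojasiewicz inequality is valid. You describe this as a "first-exit-time" argument where the $L^2$-displacement supplied by the inequality is "smaller than the distance travelled." But the \L ojasiewicz inequality controls only the $L^2$-displacement, while the neighbourhood you must stay in is a $C^{2,\alpha}$-ball; a pure $L^2$ argument cannot directly forbid exit from the $C^{2,\alpha}$-ball, and in the Brakke flow setting one cannot even a priori assume the flow stays smooth and graphical. The paper resolves this with an explicit Extension Lemma (its Lemma 2.2): a compactness argument for Brakke flows showing that if the flow is graphical with $C^{2,\alpha}$-norm $\leq\sigma$ and $L^2$-norm $\leq\delta$ on $[\beta\tau,\tau]$ \emph{and} its Gaussian density at $(0,0)$ is $\geq\Theta_{(0,0)}(\cm_\Sigma)$, then it extends graphically to $[\beta\tau,\tau/\beta]$ with $C^{2,\alpha}$-norm still $\leq\sigma$. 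Crucially, the Gaussian density hypothesis (ii) is used here — not just to "pin $F$ just above $F(\Sigma)$" — but to force the Brakke limit in the compactness argument to equal $\cm_\Sigma$ exactly (ruling out limits with higher density or non-smooth limits), which is what lets one upgrade weak closeness to smooth $C^{2,\alpha}$-closeness forward in time. Your proposal does not make this connection, and without it the bootstrap step does not close in the Brakke setting. Once that gap is filled, the iteration (alternating the Extension Lemma with the $L^2$-control from \L ojasiewicz, as the paper does) produces the global graphical representation, and the remainder of your argument — the polynomial decay of $\mathcal{E}-\mathcal{E}(0)$ and interpolation — goes through as written (the paper additionally re-centers the \L ojasiewicz argument at $\Sigma'$ to obtain the rate cleanly, but this is a minor variant).
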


For the definition of the Gaussian density ratios and Gaussian density
we refer the reader to section \ref{sec:graphrep}.
The above theorem implies uniqueness of compact tangent flows as
follows. Let the
parabolic rescaling with a factor $\lambda>0$ be given by
$$ \mathcal{D}_\lambda: \R^{n+k}\times \R\rightarrow \R^{n+k}\times\R,
(x,t)\mapsto (\lambda x, \lambda^2t)\ .$$
Note that any Brakke flow $\cm$ (smooth MCF) is mapped to a
Brakke flow (smooth MCF), i.e. $\mathcal{D}_\lambda(\cm)$ is
again a Brakke flow (smooth MCF).

Let $(x_0,t_0)$ be a point in space-time and 
$(\lambda_i)_{i\in\N}, \lambda_i \rightarrow \infty,$ be a sequence of
positive numbers. If $\cm$ is a Brakke flow with bounded
area ratios, then the compactness theorem for Brakke flows (see
\cite[7.1]{Ilmanen}) ensures
that
\begin{equation}
  \label{eq:intro3}
  \mathcal{D}_{\lambda_i}(\cm - (x_0,t_0)) \rightarrow \cm'\ ,
\end{equation}
where $\cm'$ is again a Brakke flow. Such a flow is called a {\it tangent
  flow} of $\cm$ at $(x_0,t_0)$. Huisken's monotonicity formula
ensures that $\cm'$ is self-similarly shrinking, i.e. it is invariant
under parabolic rescaling.

\begin{corollary}
  \label{thm:maincor}
Let $\cm$ be an integral $n$-Brakke flow with bounded area ratios, 
and assume that at $(x_0,t)\in
\R^{m+k}\times \R$ a tangent flow of $\cm$ is $\cm_\Sigma$. Then this
tangent flow is unique, i.e. for any sequence $(\lambda_i)_{i\in\N}$
of positive numbers, $\lambda_i \rightarrow \infty$ it holds
$$ \mathcal{D}_{\lambda_i}(\cm - (x_0,t_0))\rightarrow \cm_\Sigma\, .$$
\end{corollary}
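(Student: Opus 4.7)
After translating so that $(x_0,t_0)=(0,0)$, the plan is to feed a sufficiently rescaled version of $\cm$ into Theorem \ref{thm:mainthm} and then read off the uniqueness from the convergence statement of the theorem.

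First I would verify hypothesis (ii) of Theorem \ref{thm:mainthm} at the origin. The Gaussian density $\Theta_{(0,0)}$ is invariant under parabolic rescalings centered at $(0,0)$, so $\Theta_{(0,0)}(\mathcal{D}_{\lambda_i}(\cm))=\Theta_{(0,0)}(\cm)$ for every $i$. Huisken's monotonicity combined with the fact that $\cm_\Sigma$ is self-similarly shrinking (so its Gaussian integrals are constant in time and equal to $\Theta_{(0,0)}(\cm_\Sigma)$), together with the assumed convergence $\mathcal{D}_{\lambda_i}(\cm)\to\cm_\Sigma$, forces $\Theta_{(0,0)}(\cm)=\Theta_{(0,0)}(\cm_\Sigma)$. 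In particular the analogous equality holds for each rescaling $\mathcal{D}_{\lambda_i}(\cm)$.

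Next I would pick $i_0$ so large that $\cm^{i_0}:=\mathcal{D}_{\lambda_{i_0}}(\cm)$ is close in measure to $\cm_\Sigma$ on some interval $(t_1,t_2)$ with $t_2<0$; this is possible directly from \eqref{eq:intro3}. Both hypotheses of Theorem \ref{thm:mainthm} are now satisfied for $\cm^{i_0}$, so the rescaled surfaces $\tilde M^{i_0}_t=(-t)^{-1/2}\cdot M^{i_0}_t$ are smooth normal graphs $\mathrm{graph}_\Sigma(v(t))$ with $v(t)\to v'$ in $C^m$ as $t\to 0^-$, for some self-similar shrinker $\Sigma'=\mathrm{graph}_\Sigma(v')$. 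To identify $\Sigma'$ with $\Sigma$, I observe that the time $s=-1$ slice of $\mathcal{D}_\lambda\cm$ is exactly $(-s)^{1/2}\,\tilde M_{s/\lambda^2}=\tilde M_{-1/\lambda^2}$, so the assumed convergence at $s=-1$ yields a sequence $\tilde M_{-1/\lambda_i^2}\to\Sigma$; on the other hand Theorem \ref{thm:mainthm} says $\tilde M_t\to\Sigma'$ as $t\to 0^-$, so by uniqueness of the limit $\Sigma'=\Sigma$.

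Finally, for an arbitrary sequence $\mu_j\to\infty$ I would use the same slice-by-slice argument: for each fixed $s<0$, the time $s$ slice of $\mathcal{D}_{\mu_j}(\cm)$ is $(-s)^{1/2}\,\tilde M_{s/\mu_j^2}$, which tends smoothly to $(-s)^{1/2}\,\Sigma$, i.e.\ to the time $s$ slice of $\cm_\Sigma$. Because the $C^m$ bounds on $v(t)$ are uniform, this convergence is locally uniform in $s\in(-\infty,0)$, which upgrades to Brakke flow convergence $\mathcal{D}_{\mu_j}(\cm)\to\cm_\Sigma$. The most delicate step is the identification $\Sigma'=\Sigma$: it hinges on the fact that the $\tilde M_t$ of Theorem \ref{thm:mainthm} and the time $-1$ slices of the rescaled sequence are literally the same objects, so the smooth limit $\Sigma'$ must agree with the a priori merely measure-theoretic limit $\Sigma$.
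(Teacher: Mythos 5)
Your proposal follows essentially the same approach as the paper's proof: translate to the origin, observe that the tangent flow hypothesis forces $\Theta_{(0,0)}(\cm)=\Theta_{(0,0)}(\cm_\Sigma)$, rescale far enough to put yourself in the regime of Theorem \ref{thm:mainthm}, identify $\Sigma'=\Sigma$ by matching the theorem's limit against the subsequence of rescaled slices already known to converge to $\Sigma$, and then use the uniform $C^m$ bounds to upgrade to convergence of the full rescaled family. The paper states this more tersely (invoking Lemma \ref{lem:locreg} where you invoke the Brakke compactness convergence directly, and compressing the final upgrade to one line), but the logical skeleton is the same.
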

\medskip
Other than the shrinking sphere and the Angenent torus
\cite{Angenent92} no further examples of compact self-similarly
shrinking solutions in codimension one are known so far. However, several
numerical solutions of D.\,Chopp \cite{Chopp94} suggest that there are a 
whole variety of such solutions. In higher codimensions this class of
solutions should be even bigger. 

In a recent work
of Kapouleas/Kleene/M{\o}ller \cite{KapouleasKleeneMoeller11} and
X.H.\,Nguyen \cite{XuanHienNguyen11} non-trivial, non-compact,
self-similarly shrinking solutions were constructed. In
\cite{Huisken90}, G.\,Huisken showed that, under the assumption that the
second fundamental form is bounded, the only solutions in the
mean convex case are shrinking spheres and cylinders. The assumption
 on the second fundamental form was recently removed by
T.H.\,Colding and W.P.\,Minicozzi in \cite{ColdingMinicozzi09b}. They also
proved a smooth compactness theorem for closed self-similarly shrinking surfaces
of fixed genus in $\R^3$, see \cite{ColdingMinicozzi09a}.

The analogous problem for minimal surfaces is the uniqueness of tangent cones.
This was studied in \cite{Taylor73, AllardAlmgren81, White83}, and, in
the case of multiplicity one tangent cones with isolated
singularities, completely settled by L. Simon in \cite{Simon83}. 
One of the main tools in the analysis therein is the generalisation of 
an inequality due to {\L}ojasiewicz for real analytic functions to the
infinite dimensional setting.

Also in the present argument, this Simon-{\L}ojasiewicz inequality for
``convex'' energy functionals on closed surfaces, plays a central
role. We adapt several ideas from \cite{Simon83, LSimon96}. In section 
\ref{sec:graphrep} we recall Huisken's monotonicity formula and show
that any integral n-Brakke flow, which is close in measure 
to a smooth mean curvature flow, has unit density and is
smooth. Furthermore we prove
a smooth extension lemma for Brakke flows close to
$\cm_\Sigma$. We also introduce the rescaled flow. In section 
\ref{sec:convergence} we treat the Gaussian integral of Huisken's
monotonicity formula for the rescaled flow as an appropriate ``energy
functional'' on $\Sigma$ and use the Simon-{\L}ojasiewicz inequality to
prove a closeness lemma. This lemma and the extension lemma
are then applied to prove the main theorem and its corollary.
 
The author would like to thank K.\,Ecker, L.\,Simon, N.\,Wickramasekera,
J.\,Bern\-stein and B.\,White for discussions and helpful comments on 
this paper. He would also like to thank the Department of Mathematics
at Stanford University for their hospitality during the winter/spring
of 2008 while the main part of this work was completed.
\bigskip
\bigskip
\section{Graphical representation and rescaling}
\label{sec:graphrep}
As in the introduction, let $\cm$ be a smooth mean
curvature flow of embedded $n$-dimensional surfaces in
$\R^{n+k}$. Let
$$\rho_{x_0,t_0}(x,t) = \frac{1}{(4\pi(t_0-t))^{n/2}}\cdot \exp
\Big(-\frac{|x-x_0|^2}{4(t_0-t)}\Big),\ \ t<t_0$$
be the backward heat kernel centered at $(x_0,t_0)$. Huisken's
monotonicity formula states that for $t<t_0$
\begin{equation}
  \label{eq:huiskenmon}
  \frac{d}{dt}\int_{M_t} \rho_{x_0,t_0}(x,t)\, d\mathcal{H}^n = -
  \int_{M_t} \Big|\vec{H}+ \frac{x^\perp}{2(t_0-t)}\Big|^2\rho_{x_0,t_0}(x,t)\, d\ch^n\ .
\end{equation}
Thus the Gaussian density ratio 
$$ \Theta_{x_0,t_0}(\cm, t):=\int_{M_t} \rho_{x_0,t_0}(x,t)\,
d\mathcal{H}^n $$
is a decreasing function in $t$. If $t_0\in (t_1,t_2]$ then the limit
$$ \Theta_{x_0,t_0}(\cm)= \lim_{t\nearrow t_0} \Theta_{x_0,t_0}(t)$$
exists for all $x_0 \in \R^{n+k}$ and is called the Gaussian density
at $(x_0,t_0)$. The corresponding result also holds for Brakke flows,
see \cite[Lemma 7]{Ilmanen:SingMCF}. In that case the equality in
\eqref{eq:huiskenmon} is replaced by the appropriate inequality. It
can be deduced from \eqref{eq:huiskenmon} that any limit of parabolic
rescalings as in \eqref{eq:intro3} is self-similarly shrinking. This
is also true if the convergence and limit are not smooth, see 
\cite[Lemma 8]{Ilmanen:SingMCF}. Self-similarly shrinking solutions
$\cm_\Sigma$ can be characterized by the fact that the 
Gaussian density ratio $\Theta_{(0,0)}(\cm_\Sigma, t)$ is 
constant in time.  

For integral Brakke flows the monotonicity of the Gaussian density
ratios together with Brakke's local regularity theorem can 
be used to show that the flow is smooth with unit density, provided
that it is weakly close to a smooth mean curvature flow.

\begin{lemma}\label{lem:locreg} Let $\cm = (\mu_t)_{t\in (t_1,t_2]}$
  be an integral $n$-Brakke flow with bounded area ratios, and 
  let $\cm' =(M^n_t)_{t\in [t_1,t_2]}$ be a
  smooth, unit density mean curvature flow of embbeded surfaces. 
  Furthermore let $\Omega
  \Subset \bar{\Omega}$ be open subsets of $\R^{n+k}$. If 
$\cm$ is sufficiently close in measure to $\cm'$ on $\bar{\Omega}\times
(t_1,t_2]$, then $\cm$ is smooth on $\Omega\times ((t_1+t_2)/2,t_2]$
with bounds on all derivatives.
\end{lemma}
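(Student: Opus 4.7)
The plan is to deduce the lemma from the Brakke--White local regularity theorem: there exists $\epsilon_0 = \epsilon_0(n,k) > 0$ such that if an integral $n$-Brakke flow has Gaussian density ratios at most $1 + \epsilon_0$ at every space-time point in a fixed parabolic neighborhood of $(x_0, t_0)$, then the flow is smooth in a smaller parabolic neighborhood, with uniform bounds on all derivatives. Applying this at every $(x_0,t_0) \in \Omega \times ((t_1+t_2)/2, t_2]$ will yield the conclusion, once the density hypothesis is verified uniformly.

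By Huisken's monotonicity formula for integral Brakke flows, for each base point $(y,s)$ the map $t \mapsto \Theta_{y,s}(\cm, t)$ is non-increasing in $t < s$. It therefore suffices to find a single time lag $\tau > 0$, fixed in terms of the data, such that $\Theta_{y,s}(\cm, s - \tau) \leq 1 + \epsilon_0$ uniformly for $(y,s)$ in a parabolic neighborhood of $\Omega \times ((t_1+t_2)/2, t_2]$; the corresponding density bound at all smaller parabolic scales then follows by monotonicity together with a parabolic rescaling around $(x_0, t_0)$.

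To produce such a $\tau$, first estimate the analogous quantity for $\cm'$: since $\cm'$ is smooth of unit density and $\bar\Omega \times [t_1,t_2]$ is compact, $\Theta_{y,s}(\cm', s - \tau) \to 1$ if $y \in M'_s$ and $\to 0$ otherwise as $\tau \to 0$, with rate uniform on the relevant compact set. A suitable fixed $\tau$ therefore gives $\Theta_{y,s}(\cm', s - \tau) \leq 1 + \epsilon_0/2$ uniformly. Next, the closeness-in-measure hypothesis transfers this bound to $\cm$: the test function $\rho_{y,s}(\cdot, s - \tau)$ has Gaussian spatial scale $\sqrt{\tau}$, and truncating it at a large ball inside $\bar\Omega$ splits the integral into a compactly supported piece, controlled by weak closeness of the measures, and an exponentially decaying exterior tail, controlled by the bounded area ratios of $\cm$. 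Choosing the measure-closeness threshold in the hypothesis sufficiently small then yields $\Theta_{y,s}(\cm, s - \tau) \leq 1 + \epsilon_0$, and the Brakke--White theorem delivers the smoothness and derivative estimates.

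The main technical obstacle is organising the parameters so that the measure-closeness threshold can be chosen independently of $(y,s)$ and simultaneously dominates both the weak-convergence error on the truncation and the Gaussian tail. Given the compactness of $\bar\Omega \times [(t_1+t_2)/2, t_2]$ and the explicit Gaussian decay of $\rho_{y,s}$, this is routine but is the only step that requires care; everything else is a direct combination of monotonicity, smoothness of $\cm'$, and the cited regularity theorem.
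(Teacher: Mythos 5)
Your proposal is correct and follows essentially the same route as the paper's own proof: use closeness in measure, bounded area ratios, and Huisken's monotonicity to deduce that the Gaussian density ratios of $\cm$ are uniformly small at a fixed parabolic scale (and hence, by monotonicity, at all smaller scales), then invoke the local regularity theorem to get smoothness with estimates. The paper packages this slightly differently: it establishes the weaker bound $\Theta \leq 5/4$, uses this together with integer rectifiability to conclude that $\cm$ has unit density, and then cites Ilmanen's Theorem~12.1 (Brakke's regularity theorem for unit density flows close in measure to a smooth one). You instead push directly for $\Theta \leq 1+\epsilon_0$ and appeal to the density-ratio form of the Brakke--White theorem, which implicitly incorporates the unit density step since $1 + \epsilon_0 < 2$; these are equivalent packagings.

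One small imprecision worth fixing: you assert that $\Theta_{y,s}(\cm', s-\tau) \to 1$ on $M'_s$ and $\to 0$ off it ``with rate uniform on the relevant compact set.'' As stated this cannot be literally true, since the pointwise limit is a discontinuous ($\{0,1\}$-valued) function of $(y,s)$ and hence cannot be a uniform limit of the continuous functions $\Theta_{y,s}(\cm', s-\tau)$. What you actually need --- and what is true, by the uniform second fundamental form bound for the smooth flow $\cm'$ on $\bar\Omega\times[t_1,t_2]$ --- is the one-sided uniform estimate $\sup_{(y,s)}\Theta_{y,s}(\cm', s-\tau) \le 1 + o(1)$ as $\tau\to 0$. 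That upper bound, not uniform convergence to the indicator function, is what makes the rest of your transfer argument go through.
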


\begin{proof}
Choose an open subset $\tilde{\Omega} \subset \mathbb{R}^n$ such that
$\Omega\Subset \tilde{\Omega}\Subset \bar{\Omega}$. The flow $(M^n_t)_{t\in
  [t_1,t_2]}$ is smooth, so the second fundamental form of the surfaces
$M_t$ is uniformly bounded on $\tilde{\Omega}\times [t_1, t_2]$. Since the
surfaces $M_t$ are embedded, the closeness in measure of $\cm$ to
$\cm'$, together with
monotonicity of the Gaussian density ratios, implies that there exists
a $\delta>0$ such that for all $t\in [\tfrac{2}{3}t_1+\tfrac{1}{3}t_2,
t_2],\ x \in \text{spt}\mu_t$,
$$\Theta_{x,\tau}(\cm, t)\leq \frac{5}{4}$$
for all $\tau \in [t,t+\delta]$. The fact that the Radon measures
$\mu_t$ come for a.e. $\! t$ from an integer $n$-rectifiable varifold 
implies that for a.e. $t\in [\tfrac{2}{3}t_1+\tfrac{1}{3}t_2,
t_2]$ at a.e. $\! x \in \text{spt}\mu_t$
we have
$$\frac{\mu_t(B_\rho(x))}{\omega_n \rho^n}\leq \frac{3}{2} $$
for all $\rho$ sufficiently small. Thus $\cm$ has unit density on $
[\tfrac{2}{3}t_1+\tfrac{1}{3}t_2,t_2]$ and Brakke's local regularity theorem
, see  \cite[12.1]{Ilmanen}, implies that if $\cm$ is close in measure
to $\cm'$, then $\cm$ is smooth on
$\Omega\times ((t_1+t_2)/2,t_2]$ with bounds on all derivatives.
\end{proof}

Now let us assume that $\cm$ is smooth for 
$t\in [\tau-1, \tau]\subset (-\infty, 0)$.
Even more we assume that $\cm$ can be written on this time interval as a normal graph over
$M_\Sigma$, i.e.
$$M_t = \text{graph}_{\Sigma_t}(u(\cdot,t))$$
where $u(\cdot,t)$ is a smooth section of the normal bundle
$T^\perp\Sigma_t$, with sufficiently small $C^1$-norm. 
Note that any $n$-surface which is sufficiently close in $C^1$
to $\Sigma_t$ can written as such a normal graph. Since the evolution
of $\Sigma$ is self-similarly shrinking we can also write
$$M_t = \sqrt{-t}\cdot \text{graph}_{\Sigma}(v(\cdot,t)),$$
which implies that $u(p,t) = \sqrt{-t}\cdot v(\tilde{p},t)$, where
$\tilde{p}\in \Sigma$ is given by $\sqrt{-t}\cdot\tilde{p} = p$. We have the
following extension lemma:
\begin{lemma}\label{lem:extension}
  Let $\beta >1$, $\tau<0$. For every $\sigma>0$ there 
  exists a $\delta > 0$, depending only on $\sigma, \beta, \Sigma$, such
  that if $\cm$ is a unit density Brakke flow with $
  \Theta_{(0,0)}(\cm) \geq \Theta_{0,0}(\cm_\Sigma)$, which is a smooth
  graph over $\cm_\Sigma$ for $t\in [\beta\tau, \tau]\subset (-\infty,
  0)$ such that
  \begin{equation}
    \label{eq:extlem1}
    \|v\|_{C^{2,\alpha}(\Sigma\times [\beta\tau,\tau])}\leq \sigma 
  \end{equation}
  and 
  \begin{equation}
    \label{eq:extlem2}
    \sup_{t\in[\beta\tau,\tau]}\|v(\cdot,t)\|_{L^2(\Sigma)} \leq \delta\ ,
  \end{equation}
  then $\cm$ is a smooth graph over $\cm_\Sigma$ on $[\beta\tau, \tau/\beta]$ for an
  extension $\tilde{v}$ of $v$ with
  \begin{equation}
    \label{eq:extlem3}
    \|\tilde{v}\|_{C^{2,\alpha}(\Sigma\times
      [\beta\tau,\tau/\beta])}\leq \sigma\ .
  \end{equation}
\end{lemma}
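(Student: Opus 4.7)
The plan is a compactness--contradiction argument, built on Lemma~\ref{lem:locreg} and the equality case of Huisken's monotonicity formula. Suppose the lemma fails: then for some $\sigma>0$ there are $\delta_i\searrow 0$ and unit density Brakke flows $\cm^i$ with $\Theta_{(0,0)}(\cm^i)\geq \Theta_{(0,0)}(\cm_\Sigma)$ satisfying \eqref{eq:extlem1}--\eqref{eq:extlem2} (with $\delta=\delta_i$) but such that no $C^{2,\alpha}$-bounded extension on $[\beta\tau,\tau/\beta]$ exists. Interpolating the $L^2$-smallness \eqref{eq:extlem2} against the uniform $C^{2,\alpha}$-bound \eqref{eq:extlem1} gives $\|v^i\|_{C^0(\Sigma\times[\beta\tau,\tau])}\to 0$; together with the $C^{2,\alpha}$-bound this implies that $\cm^i \to \cm_\Sigma$ in measure, with uniform curvature bounds, on $[\beta\tau,\tau]$.

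Huisken's monotonicity and the density hypothesis yield uniformly bounded Gaussian density ratios at $(0,0)$ for $\cm^i$, hence uniform local area-ratio bounds. Brakke's compactness theorem then extracts a subsequential limit $\cm^\infty$ on $[\beta\tau,0)$, with $\cm^\infty=\cm_\Sigma$ on $[\beta\tau,\tau]$. The key point is to identify $\cm^\infty=\cm_\Sigma$ on all of $[\beta\tau,0)$. Upper semicontinuity of Gaussian densities, combined with $\Theta_{(0,0)}(\cm^i)\geq \Theta_{(0,0)}(\cm_\Sigma)$, yields $\Theta_{(0,0)}(\cm^\infty,t)\geq \Theta_{(0,0)}(\cm_\Sigma)$ for every $t\in[\beta\tau,0)$, while monotonicity together with $\Theta_{(0,0)}(\cm^\infty,\tau)=\Theta_{(0,0)}(\cm_\Sigma)$ gives the reverse inequality for $t\geq\tau$. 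Hence $\Theta_{(0,0)}(\cm^\infty,\cdot)$ is constant on $[\beta\tau,0)$, so the equality case of \eqref{eq:huiskenmon} forces $\cm^\infty$ to be self-similarly shrinking around $(0,0)$; since $\cm^\infty_\tau=\Sigma_\tau$, this pins down $\cm^\infty=\cm_\Sigma$.

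Having identified the limit, $\cm^i\to\cm_\Sigma$ in measure on $\R^{n+k}\times[\beta\tau,\tau/\beta+\ve]$ for some $\ve>0$. Applying Lemma~\ref{lem:locreg} on a tubular neighborhood of the compact smooth family $\{\Sigma_t\}_{t\in[\beta\tau,\tau/\beta+\ve]}$ gives, for $i$ large, smoothness of $\cm^i$ there with uniform bounds on all derivatives. Combined with the $C^0$-closeness to $\Sigma_t$, this lets us represent $\cm^i$ as a normal graph over $\cm_\Sigma$ on $[\beta\tau,\tau/\beta]$ whose rescaled graph function $\tilde v^i$ satisfies $\|\tilde v^i\|_{C^{2,\alpha}}\to 0$. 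For $i$ large this norm is bounded by $\sigma$, contradicting the choice of $\cm^i$. The principal obstacle is the identification step in the second paragraph: Brakke-flow continuations of a smooth flow are not unique in general, and without the density lower bound $\Theta_{(0,0)}(\cm)\geq \Theta_{(0,0)}(\cm_\Sigma)$ one could not exclude sudden mass loss of $\cm^\infty$ at time $\tau$; the equality case of \eqref{eq:huiskenmon} is precisely what makes this rigidity available.
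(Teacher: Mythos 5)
Your proposal is correct and follows essentially the same compactness--contradiction argument as the paper: extract a Brakke-flow subsequential limit, use the $L^2$-smallness plus the $C^{2,\alpha}$-bound to pin the limit down as $\cm_\Sigma$ on $[\beta\tau,\tau]$, invoke upper semicontinuity of the Gaussian density together with the hypothesis $\Theta_{(0,0)}(\cm)\geq\Theta_{(0,0)}(\cm_\Sigma)$ and Huisken's monotonicity to force constancy of the density ratio and hence self-similarity (ruling out mass drop), and finally apply Lemma~\ref{lem:locreg} to upgrade the weak convergence to smooth convergence and contradict the failure of the graphical extension. The only superficial difference is that the paper first rescales so that $\tau=-1$; your remark at the end correctly identifies the role of the density hypothesis as the rigidity that excludes sudden vanishing after time $\tau$.
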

\begin{proof}
  By changing scale, we can assume that $\tau = -1$. If the statement
  were false we could find a sequence of Brakke flows $\cm^k$ with $
  \Theta_{(0,0)}(\cm^k) \geq \Theta_{0,0}(\cm_\Sigma)$, which are smooth
  graphs over $\cm_\Sigma$ for $t\in [-\beta,-1]$. Furthermore we can assume that
  \eqref{eq:extlem1} holds for all $k$ and
  \begin{equation}
    \label{eq:extlem4}
    \sup_{t\in[-\beta,-1]}\|v(\cdot,t)\|_{L^2(\Sigma)} \leq \frac{1}{k}\ ,
  \end{equation}
but $\cm^k$ is not a smooth graph over $\cm_\Sigma$ for $t\in
[-1,-1/\beta]$ satisfying \eqref{eq:extlem3}. 
By the compactness theorem for Brakke flows there exists a subsequence
$\cm^{k'}$ such that
$$\cm^{k'}\rightarrow \cm'\ ,$$
where $\cm'$ is again a Brakke flow. Since the Gaussian density is upper
semi-continuous we have 
$$\Theta_{(0,0)}(\cm') \geq \Theta_{0,0}(\cm_\Sigma)$$
and \eqref{eq:extlem3}, \eqref{eq:extlem4} imply that
$$\cm' = \cm_\Sigma\ \ \text{for}\ t\in [-\beta,-1]\ ,$$
which in turn forces
$$\Theta_{(0,0)}(\cm') = \Theta_{0,0}(\cm_\Sigma)\, .$$
But this implies that $\cm'$ is self similarly shrinking for $t\in
(-\beta,0)$ and coincides with $\cm_\Sigma$ for $t\in (-\beta,0)$ (with unit
density). By Lemma \ref{lem:locreg}
the convergence 
$$\cm^{k'} \rightarrow \cm_\Sigma$$
is smooth on any compact subset of $\R^{n+k}\times(-\beta,0)$, which gives the
desired contradiction.
\end{proof}
To study mean curvature flows close to the evolution of $\Sigma$ it is 
convenient to consider the rescaled flow,
 i.e. if $F:M^n\times (t_1,t_2) \rightarrow
\mathbb{R}^{n+k},\ t_1<t_2\leq 0$ is a smooth mean curvature flow, the
rescaled embeddings
$$ \tilde{F}(\cdot,\tau):= \frac{1}{\sqrt{-t}}F(\cdot,t)$$
with $\tau = -\log(-t)$ have normal speed
\begin{equation}
  \label{eq:mcfresc}
  \Big(\frac{\partial}{\partial \tau} \tilde{F}\Big)^\perp =
  \vec{H}+\frac{x^\perp}{2}\ .
\end{equation}
The monotonicity formula \eqref{eq:huiskenmon}, centered at $(0,0)$, in the rescaled setting reads
\begin{equation*}
  \frac{d}{d\tau}\int_{\tilde{M}_\tau}\rho(x)
 \, d\mathcal{H}^n = -
  \int_{\tilde{M}_\tau} \Big|\vec{H}+ \frac{x^\perp}{2}\Big|^2\rho(x)\, d\ch^n\ ,
\end{equation*} 
 with $\rho(x)= (4\pi)^{-n/2}\exp\big(-|x|^2/4\big)$. If the surfaces
 $\tilde{M}_\tau:= 1/\sqrt{-t}\cdot M_t$ can be written as normal
 graphs 
over $\Sigma$, i.e.
$$\tilde{M}_\tau = \text{graph}_{\Sigma}(v(\cdot,\tau))\, ,$$
equation \eqref{eq:mcfresc} implies
\begin{equation}
  \label{eq:mcfrescgraph}
   \Big(\frac{\partial}{\partial \tau} v \Big)^\perp =
  \vec{H}+\frac{x^\perp}{2}\, ,
\end{equation}
where, as before, ${}^\perp$ denotes the projection onto the normal
space of $\tilde{M}_\tau$.
\section{Convergence}
\label{sec:convergence}
To show that the flow $\tilde{M}_\tau$ stays close to $\Sigma$ we treat the
Gaussian integral of the monotonicity formula as an ``energy
functional'' for surfaces which can be written as normal graphs over
$\Sigma$. Let $v$ be a smooth section of the normal bundle
$T^\perp\Sigma$ with sufficiently small $C^1$-norm, i.e. such that
$M:=\text{graph}_\Sigma(v)$ is a smooth $n$-surface, which is close in
$C^1$ to $\Sigma$. The energy $\mathcal{E}$ is then given by
\begin{equation*}
  \mathcal{E}(v)= \int_M \rho(x)\, d\ch^n(x) = \int_\Sigma \rho(y+v(y))\,
  J(y,v, \nabla^\Sigma v)\, d\ch^n(y)\ ,
\end{equation*}
where by the area formula $J$ is a smooth function with analytic
dependence on its arguments.  Furthermore it is uniformly convex in
$\nabla^\Sigma v$, if the $C^1$-norm of $v$ is sufficiently small. 
The first variation of $\mathcal{E}$ at $v$ is given by
\begin{equation*}
  \begin{split}
  \frac{\partial}{\partial
    s}\bigg|_{s=0}\!\!\!\!\!\!\!\mathcal{E}(v+sf) 
  &=  - \int_M
  \Big\langle \vec{H}+x^\perp/2, f\Big\rangle_{\R^{n+k}} \rho(x)\,
  d\ch^n(x)\\
  &= - \int_{\Sigma}
  h\Big(\Pi\big(\vec{H}+x^\perp/2\big), f\Big)\,
  \rho(y+v(y))\,  J(y, v, \nabla^\Sigma v)\,
  d\ch^n(y)\ , 
  \end{split}
\end{equation*}
where $h$ is the induced metric on the normal bundle to $\Sigma$, and
$\Pi$ the projection to the normal space of $\Sigma$ at $y$. Thus the
$L^2$-gradient operator of $\mathcal{E}$ is given by 
\begin{equation*}
\text{grad}\, \mathcal{E}(v) =  - \Pi\Big(\vec{H}+\frac{(y+v(y))^\perp}{2}\Big)\, \rho(y+v(y))\,  J(y, v, \nabla^\Sigma v)
\end{equation*}
where  $\vec{H}$ is the mean curvature operator of
$\text{graph}_\Sigma(v)$ at $y+v(y)$.

We aim to apply the Simon-{\L}ojasiewicz inequality as proven in \cite{Simon83}.
One can easily check that all conditions to apply Theorem 3 therein are
met, and we thus get that there are constants $\sigma_0>0$ and $\theta
\in (0,\tfrac{1}{2})$ such that if $v$ is a $C^{2,\alpha}$ section of
$T^\perp\Sigma$ with $|v|_{2,\alpha}<\sigma_0$ then
\begin{equation}
  \label{eq:conv4}
  \|\text{grad}\,\mathcal{E}(v)\|_{L^2(\Sigma)}\geq |\mathcal{E}(v)- 
  \mathcal{E}(0)|^{1-\theta}\, .
\end{equation}

We can assume that  $\sigma_0>0$ is small enough such that for any
normal section $v$ with $|v|_{C^{2,\alpha}}<\sigma_0$ the normal graph,
$\text{graph}_{\Sigma}(v)$, is a smooth $n$-surface. This estimate 
allows us to show that solutions of \eqref{eq:mcfrescgraph} stay 
close to $\Sigma$, provided they are bounded in $C^{2,\alpha}$.

\begin{lemma}
  \label{lem:L2control}
  Let $v: \Sigma\times[\tau_1,\tau_2]\rightarrow T^\perp\Sigma$ be a
  smooth solution of \eqref{eq:mcfrescgraph} with 
  $|v(\cdot, \tau)|_{2,\alpha}<\sigma_0$  for all
  $\tau\in[\tau_1,\tau_2]$ and $\mathcal{E}(v(\tau_2))\geq
  \Theta_{(0,0)}(\cm_\Sigma)$. Then there exists $\gamma > 0$,
  depending only on $\sigma_0, \mathcal{E}$ and $\Sigma$, such that
  \begin{equation}
    \label{eq:conv5}
    \int_{\tau_1}^{\tau_2} \big\|\tfrac{\partial v}{\partial\tau}
    \big\|_{L^2(\Sigma)}\, d\tau \leq \frac{1}{\gamma\theta}
    \big(\mathcal{E}(v(\tau_1))- \mathcal{E}(0))\big)^\theta
  \end{equation}
  and thus
  \begin{equation}
    \label{eq:conv6}
    \sup_{\tau\in[\tau_1,\tau_2]}\|v(\tau)-v(\tau_1)\|_{L^2(\Sigma)} 
    \leq \frac{1}{\gamma\theta}
    \big(\mathcal{E}(v(\tau_1))- \mathcal{E}(0)\big)^\theta\ .
  \end{equation}
\end{lemma}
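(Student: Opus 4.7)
The plan is to combine the Łojasiewicz inequality \eqref{eq:conv4} with the fact that the rescaled flow \eqref{eq:mcfrescgraph} is, up to uniformly bounded weighting factors, the $L^2$-gradient flow of $\mathcal{E}$. Then a standard Łojasiewicz integration in time yields \eqref{eq:conv5}, and the triangle inequality in $L^2(\Sigma)$ gives \eqref{eq:conv6}.

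First I would prove the key bilinear inequality
\begin{equation*}
  -\frac{d}{d\tau}\mathcal{E}(v(\tau)) \;\geq\; \gamma\,\|\partial_\tau v\|_{L^2(\Sigma)}\cdot\|\text{grad}\,\mathcal{E}(v)\|_{L^2(\Sigma)}
\end{equation*}
for some $\gamma=\gamma(\sigma_0,\Sigma)>0$. This is where the compactness of $\Sigma$ together with $|v|_{C^{2,\alpha}}<\sigma_0$ enter: both the Gaussian weight $\rho(y+v(y))$ and the Jacobian $J(y,v,\nabla^\Sigma v)$ are bounded above and below by positive constants on $\Sigma$, and since $v$ is $C^1$-small the normal bundle $T^\perp\tilde{M}_\tau$ is close to $T^\perp\Sigma$, so the orthogonal projection $\Pi$ restricted to $T^\perp\tilde{M}_\tau$ is a near-identity isomorphism. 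Using the Huisken identity in the form $-d\mathcal{E}/d\tau=\int_\Sigma |\vec H+x^\perp/2|^2\rho J\,d\mathcal{H}^n$, one then shows two comparable bounds:
\begin{equation*}
  \|\partial_\tau v\|_{L^2(\Sigma)}^2\;\leq\;C_1\Bigl(-\tfrac{d\mathcal{E}}{d\tau}\Bigr),\qquad \|\text{grad}\,\mathcal{E}(v)\|_{L^2(\Sigma)}^2\;\leq\;C_2\Bigl(-\tfrac{d\mathcal{E}}{d\tau}\Bigr),
\end{equation*}
the first because $(\partial_\tau v)^{\perp_{\tilde M}}=\vec H+x^\perp/2$ and the tangential-to-$\tilde M$ part of $\partial_\tau v$ is $C^1$-controlled by the normal part when $T^\perp\tilde M_\tau$ is close to $T^\perp\Sigma$, and the second directly from the formula for $\text{grad}\,\mathcal{E}$. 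Taking the geometric mean of these two inequalities yields the key bilinear bound above.

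Next I invoke the Łojasiewicz inequality \eqref{eq:conv4}, which applies since $|v(\cdot,\tau)|_{C^{2,\alpha}}<\sigma_0$, to replace $\|\text{grad}\,\mathcal{E}(v)\|_{L^2(\Sigma)}$ by $(\mathcal{E}(v)-\mathcal{E}(0))^{1-\theta}$. Here the hypothesis $\mathcal{E}(v(\tau_2))\geq \mathcal{E}(0)=\Theta_{(0,0)}(\cm_\Sigma)$, together with Huisken's monotonicity, guarantees that $\mathcal{E}(v(\tau))-\mathcal{E}(0)\geq 0$ for every $\tau\in[\tau_1,\tau_2]$, so the power $(\mathcal{E}(v(\tau))-\mathcal{E}(0))^{\theta}$ is well-defined. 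The resulting differential inequality
\begin{equation*}
  -\tfrac{d}{d\tau}\bigl(\mathcal{E}(v(\tau))-\mathcal{E}(0)\bigr)^{\theta} \;=\; -\theta(\mathcal{E}(v(\tau))-\mathcal{E}(0))^{\theta-1}\tfrac{d\mathcal{E}}{d\tau}\;\geq\;\gamma\theta\,\|\partial_\tau v\|_{L^2(\Sigma)}
\end{equation*}
integrates immediately from $\tau_1$ to $\tau_2$ to give \eqref{eq:conv5} (the boundary term at $\tau_2$ is non-negative, so discarding it only improves the bound). The second estimate \eqref{eq:conv6} is then simply $\|v(\tau)-v(\tau_1)\|_{L^2}\leq\int_{\tau_1}^{\tau}\|\partial_s v\|_{L^2}\,ds$ and the bound just proved.

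The main technical obstacle is the first step, namely the bilinear inequality: one must verify that the genuine $L^2(\Sigma)$-gradient-flow structure is only weakly deformed by (i) the Gaussian/area weight $\rho J$ and (ii) the mismatch between $T^\perp\Sigma$ and $T^\perp\tilde M_\tau$ built into \eqref{eq:mcfrescgraph}. Both deformations are controlled uniformly because $\Sigma$ is compact and $v$ is uniformly small in $C^1$; everything after this step is formal integration in the spirit of \cite{Simon83, LSimon96}.
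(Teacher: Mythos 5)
Your proposal is correct and takes essentially the same route as the paper. Both arguments establish the bilinear estimate $-\frac{d}{d\tau}\mathcal{E}(v(\tau))\geq\gamma\,\|\partial_\tau v\|_{L^2(\Sigma)}\|\mathrm{grad}\,\mathcal{E}(v)\|_{L^2(\Sigma)}$ by controlling the weight $\rho J$ above and below and the normal-bundle projection as a near-identity (the paper phrases this as rewriting $-d\mathcal{E}/d\tau$ as a product of two equal square roots and bounding each factor separately, which is the same as your geometric-mean step), and then both insert the Simon--{\L}ojasiewicz inequality, integrate the resulting differential inequality for $(\mathcal{E}(v)-\mathcal{E}(0))^{\theta}$ using the monotonicity of $\mathcal{E}$ together with $\mathcal{E}(v(\tau_2))\geq\mathcal{E}(0)$ to discard the boundary term, and conclude \eqref{eq:conv6} from \eqref{eq:conv5} by the fundamental theorem of calculus.
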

\begin{proof}
  We have
  \begin{equation*}
    \begin{split}
      \frac{d}{d\tau}\mathcal{E}(v(\tau)) &= - 
      \int_{M_\tau}\Big|\vec{H}+ \frac{x^\perp}{2}\Big|^2\rho(x)\,
      d\ch^n\\
      &= - \Big( \int_{M_\tau}\Big|\vec{H}+ \frac{x^\perp}{2}\Big|^2\rho(x)\,
      d\ch^n\Big)^{1/2}\cdot\Big(\int_{\Sigma} \Big|\vec{H}+ 
      \frac{x^\perp}{2}\Big|^2\, \rho(x)\,  J(y)\,
      d\ch^n(y)\Big)^{1/2}\\
      &\leq - \Big( \int_{\Sigma}\Big|\text{grad}\,
      \mathcal{E}(v(\tau))|^2 \big(\rho(y+v(y))J(y)\big)^{-1}\,
      d\ch^n(y)\Big)^{1/2}\\
      &\quad\cdot\Big(\int_{\Sigma}
      \Big|\Big(\frac{\partial}{\partial \tau}v\Big)^\perp\Big|^2\, 
      \rho(y+v(y))\,  J(y)\,
      d\ch^n(y)\Big)^{1/2}\\
      &\leq -\, \gamma\, 
      \|\text{grad}\,\mathcal{E}(v)\|_{L^2(\Sigma)}\cdot 
      \big\|\tfrac{\partial v}{\partial\tau}\big\|_{L^2(\Sigma)}\ ,
    \end{split}
  \end{equation*}
where $\gamma>0$ depends only on $\sigma_0$ and $\Sigma$. Since
$\mathcal{E}(v(\tau))$ is a decreasing function of $\tau$ 
and $\mathcal{E}(0)=\Theta_{(0,0)}(\cm_\Sigma)$ this yields
together with \eqref{eq:conv4},
\begin{equation*}
  \begin{split}
    -\frac{d}{d\tau}\big(\mathcal{E}(v(\tau))-\mathcal{E}(0)\big)^\theta
    &\geq \gamma\, \theta\,
    \big(\mathcal{E}(v(\tau))-\mathcal{E}(0)\big)^{\theta-1}
    \|\text{grad}\,\mathcal{E}(v)\|_{L^2(\Sigma)}\cdot 
      \big\|\tfrac{\partial v}{\partial\tau}\big\|_{L^2(\Sigma)}\\
      &\geq \gamma\, \theta\, \big\|\tfrac{\partial v}{\partial\tau}
      \big\|_{L^2(\Sigma)}\ .
  \end{split}
\end{equation*}
Integrating this inequality yields \eqref{eq:conv5}, which implies 
\eqref{eq:conv6}.
\end{proof}
\begin{proof}[Proof of Theorem \ref{thm:mainthm}]
Let
$\beta=(t_1+t_2)/(2t_2)$ and $\delta>0$ be given by lemma
\ref{lem:extension}, for $\sigma = \sigma_0$. Choose
$0<\tilde{\sigma}<\sigma_0$ such
that $|v|_{C^{2,\alpha}}<\tilde{\sigma}$ implies
\begin{itemize}
\item[i)] $\|v\|_{L^2(\Sigma)}\leq \delta/3$,\\[-1ex]
\item[ii)]
  $\frac{1}{\gamma\theta}\big|\mathcal{E}(v)-\mathcal{E}(0)\big|^\theta\leq
\frac{2\delta}{3}\ ,$
\end{itemize}
where $\gamma$ is the constant given by Lemma \ref{lem:L2control}.

By Lemma \ref{lem:locreg} we can assume that $\cm$ is actually a smooth
graph over $\Sigma_t$ for $t \in ((t_1+t_2)/2, t_2)$,
such that $M_t = \text{graph}_{\Sigma_t}(u(\cdot, t))$. 
Even more, we can assume that in the rescaled setting for
$$\tilde{M}_\tau=\frac{1}{\sqrt{-t}}M_t = \text{graph}_\Sigma(v(\cdot,\tau)) $$
 the normal section $v(\tau)$ satisfies 
$|v(\tau)|_{C^{2,\alpha}}< \tilde{\sigma}$, where $\tau =
-\log(-t)$ and $\tau \in \big(\tau_0-\log(\beta),\tau_0\big)$,
for $\tau_0=-\log(-t_2)$. Lemma \ref{lem:extension} implies that
$\tilde{M}_\tau$ is actually graphical over $\Sigma$ for an extension
$\tilde{v}$ of $v$ on $\big(\tau_0-\log(\beta),
\tau_0+\log(\beta)\big)$ satisfying $|\tilde{v}(\tau)|_{C^{2,\alpha}}<
\sigma_0$. But note that Lemma \ref{lem:L2control} then implies that for
$\tau \in [\tau_0, \tau_0+\log(\beta))$
\begin{equation*}
  \|\tilde{v}(\tau)\|_{L^2(\Sigma)}\leq \|v(\tau_0)\|_{L^2(\Sigma)}+\frac{1}{\gamma\theta}
    \big(\mathcal{E}(\tilde{v}(\tau_0))-
    \mathcal{E}(0))\big)^\theta
    \leq \frac{\delta}{3}+ \frac{2\delta}{3} = \delta\ .
\end{equation*}
This now allows
us to iterate the previous step to find that actually $\tilde{M}_\tau$ is a
smooth graph over $\Sigma$ for an extension $\tilde{v}(\tau)$ of $v$
for all $\tau \in (\tau_0,\tau_0+2\log(\beta))$ such that 
$|\tilde{v}(\tau)|_{C^{2,\alpha}}<\sigma_0$. Again we have
\begin{equation*}
  \begin{split}
  \|\tilde{v}(\tau)\|_{L^2(\Sigma)}&\leq \|v(\tau_0)\|_{L^2(\Sigma)}+\frac{1}{\gamma\theta}
    \big(\mathcal{E}(\tilde{v}(\tau_0+\log(\beta)))-
    \mathcal{E}(0))\big)^\theta\\
    &\leq \frac{\delta}{3}+\frac{1}{\gamma\theta}
    \big(\mathcal{E}(v(\tau_0))-
    \mathcal{E}(0))\big)^\theta \leq \delta\ 
    \end{split}
\end{equation*}
for $\tau \in (\tau_0,\tau_0+2\log(\beta))$, since
$\mathcal{E}(v(\tau))$ is decreasing in $\tau$ and
$$ \mathcal{E}(v(\tau)) \geq \Theta_{(0,0)}(\cm)\geq
\Theta_{(0,0)}(\cm_\Sigma)=\mathcal{E}(0)\ .$$
This can be iterated
to yield that there is an extension $\tilde{v}(\tau)$ of $v$
for all $\tau \in (\tau_0,\infty)$ such that 
$|\tilde{v}(\tau)|_{C^{2,\alpha}}<\sigma_0$ for all $\tau\geq
\tau_0$. Standard estimates for graphical solutions imply the bounds on
all higher derivatives.

The estimate \eqref{eq:conv5} implies that
$$\int_{\tau_0}^{\infty} \big\|\tfrac{\partial v}{\partial\tau}
    \big\|_{L^2(\Sigma)}\, d\tau \leq \frac{1}{\gamma\theta}
    \big(\mathcal{E}(v(\tau_0))- \mathcal{E}(0))\big)^\theta\leq
    \frac{\delta}{3}\ .$$
By the bounds on all higher derivatives this gives that
$$\tfrac{\partial v}{\partial\tau} \rightarrow 0 \ \ \text{in}\ C^k$$
for all $k$ uniformly as $\tau \rightarrow \infty$. Thus there is a
sequence of times $\tau_i\rightarrow \infty$ such that
$v(\cdot,\tau_i+\tau)\big|_{\tau \in (0,1)}$ converges uniformly
in $C^k(\Sigma \times (0,1))$, for all $k$, 
to a time independent solution $v'$ 
of \eqref{eq:mcfrescgraph}. In other words
$$ \Sigma' := \text{graph}_\Sigma(v') $$
gives rise to a self-similarly shrinking solution of MCF. Furthermore
note that 
$$\Theta_{(0,0)}(\cm) = \Theta_{(0,0)}(\cm_{\Sigma'}) =
\mathcal{E}_{\Sigma'}(0)\ .$$
Thus we can repeat the whole argument before, writing now $M_{\tau}$
as normal graphs over $\Sigma'$, given by a time-dependent normal section $w(\tau)$. Since now
$\mathcal{E}_{\Sigma'}(w(\tau))\rightarrow \mathcal{E}_{\Sigma'}(0)$
and $w(\tau_i)\rightarrow 0$, 
the estimate \eqref{eq:conv6} already implies that 
$$ w(\tau) \rightarrow 0 \ \ \text{uniformly as}\ \tau\rightarrow
\infty\ .$$
To get the claimed decay rate we follow ideas in
\cite{LSimon96}. Note that from a calculation as in the
proof of Lemma \ref{lem:L2control} and \eqref{eq:conv4}, we get for
some $\tilde{\gamma}>0$
\begin{equation*}
  \begin{split}
    \frac{d}{d\tau}\big(\mathcal{E}_{\Sigma'}(w(\tau))-\mathcal{E}_{\Sigma'}(0)\big)
    &\leq
    -\tilde{\gamma}\,\|\text{grad}\,\mathcal{E}_{\Sigma'}\|^2_{L^2(\Sigma')}\\
    &\leq -\tilde{\gamma}\, \big(\mathcal{E}_{\Sigma'}(w(\tau))
    -\mathcal{E}_{\Sigma'}(0)\big)^{2(1-\theta)}\, .
  \end{split}
\end{equation*}
Integrating this inequality yields
$$\big|\mathcal{E}_{\Sigma'}(w(\tau))
    -\mathcal{E}_{\Sigma'}(0)\big| \leq C\frac{1}{\tau^{1+\alpha}}
    ,$$
where $\alpha = 2\theta/(1-2\theta)$. The estimate  \eqref{eq:conv4} gives
$$\|v(\tau)\|_{L^2(\Sigma')} \leq C\frac{1}{\tau^{\theta(1+\alpha)}}  .$$
Rewriting $\tau = -\log(-t)$ yields the claimed decay for
the $L^2$-norm. The higher norms follow by interpolation.
\end{proof}
\begin{proof}[Proof of Corollary \ref{thm:maincor}] We can assume
  w.l.o.g. that $(x_0,t_0)=(0,0)$. Since $\cm_\Sigma$
  is smooth with unit density, and $\Sigma$ is compact, Lemma
  \ref{lem:locreg}
  implies that the convergence
  $$\mathcal{D}_{\lambda_i}(\cm) \rightarrow \cm_\Sigma$$
  is smooth on $\R^{m+k}\times (t_1,t_2)$, for any $t_1<t_2<0$. Since
  $\Theta_{(0,0)}(\cm)=\Theta_{(0,0)}(\cm_\Sigma)$ we can apply
  Theorem \ref{thm:mainthm} to see that $\cm$ is a smooth graph over
  $\cm_\Sigma$ for $t\in (t_1,0)$. The assumption that $\cm_\Sigma$ is
  a tangent flow of $\cm$ at $(0,0)$
  implies that after rescaling, there is a sequence of times $\tau_i
  \rightarrow \infty$ such that $\tilde{M}_{\tau_i}\rightarrow \Sigma$. Thus
  $\Sigma'=\Sigma$ and $\tilde{M}_\tau \rightarrow \Sigma$ as $\tau \rightarrow \infty$.
  This implies the statement of the corollary.  
\end{proof}

\end{document}